\newtheorem{theorem}{Theorem}[section]
\newtheorem{lemma}[theorem]{Lemma}
\theoremstyle{definition}
\theoremstyle{remark}
\newtheorem{remark}[theorem]{Remark}
\newtheorem{example}[theorem]{Example}
\numberwithin{equation}{section}
\begin{document} 

\title[A relation between Milnor's $\mu$-invariants and HOMFLYPT polynomials]{A relation between Milnor's $\mu$-invariants and HOMFLYPT polynomials}

\author[Yuka Kotorii]{Yuka Kotorii} 

\begin{abstract} 
Polyak showed that any Milnor's $\overline{\mu}$-invariant of length 3 
can be represented as a combination of the Conway polynomials of knots obtained 
by certain band sum of the link components.
On the other hand, Habegger and Lin showed that Milnor invariants are also invariants for string links, called $\mu$-invariants. 
We show that any Milnor's ${\mu}$-invariant of length $\leq k+2$ can be represented as a combination of the HOMFLYPT polynomials of knots obtained from the string link by some operation, if all ${\mu}$-invariants 
of length $\leq k$ vanish.  
Moreover, ${\mu}$-invariants of length $3$ are given by a combination of the Conway polynomials and linking numbers without any vanishing assumption.  
\end{abstract} 

\thanks{ A part of this work was supported by Platform for Dynamic Approaches to
Living System from the Ministry of Education, Culture, Sports, Science and
Technology, Japan.}


\maketitle

\section{Introduction}
For an ordered oriented link in the 3-sphere, J. Milnor \cite{Milnor, Milnor2} defined a family of invariants, 
known as {\em Milnor's $\overline{\mu}$-invariants}. 
For an $n$-component link $L$, Milnor invariant is determined by a sequence $I$ of elements in $\{1,2,\cdots ,n\}$
and denoted by $\overline{\mu}_L(I)$. 
It is known that Milnor invariants of length two are just linking numbers.
In general, Milnor invariant $\overline{\mu}_L(I)$ is only well-defined modulo the greatest common divisor $\Delta _L(I)$ 
of all Milnor invariants $\overline{\mu}_L(J)$ such that $J$ is a subsequence of $I$ obtained by removing at least one 
index or its cyclic permutation. 
If the sequence is of distinct numbers, then this invariant is also a link-homotopy invariant and 
we call it {\em Milnor's link-homotopy invariant}.
Here, the {\em link-homotopy} is an equivalence relation generated by ambient isotopy and self-crossing changes. 

In \cite{HL}, N. Habegger and X. S. Lin showed that Milnor invariants are also invariants for string links, and these invariants are called Milnor's $\mu$-invariants.
For any string link $\sigma$, $\mu_{\sigma}(I)$ coincides with $\bar{\mu}_{\hat{\sigma}}(I)$ modulo $\Delta_{\hat{\sigma}}(I)$, 
where $\hat{\sigma}$ is a link obtained by the closure of $\sigma$.
Milnor's $\mu$-invariants of length $k$ are finite type invariants of degree $k-1$ for any natural integer $k$, as shown by D. Bar-Natan \cite{BN2} and X. S. Lin \cite{Lin}.  

In \cite{P}, M. Polyak gave a formula expressing Milnor's $\bar{\mu}$-invariant of length 3 by the Conway polynomials of knots. 
His idea was derived from the following relation. 
Both Milnor's $\mu$-invariant of length 3 for string link and the second coefficient of the Conway polynomial are finite type invariants of degree 2. 
He gave this relation by using Gauss diagram formulas.  
 
Then, in \cite{MY}, J-B. Meilhan and A. Yasuhara generalized it by using the clasper theory introduced by K. Habiro \cite{H}. 
They showed that general Milnor's $\bar{\mu}$-invariants can be represented by the HOMFLYPT polynomials of knots under some assumption. 
Moreover the author and A. Yasuhara improved it in \cite{KY}.

In this paper, we give a formula expressing Milnor's $\mu$-invariant by the HOMFLYPT polynomials of knots under some assumption (Theorem~\ref{main}) by using the clasper theory in \cite{H}. The course of proof is similar to that in \cite{MY}. 
Moreover, Milnor's $\mu$-invariants of length 3 for any string link are given by the Conway polynomial, which is a finite type invariant of degree 2, and the linking number (Theorem~\ref{main2}).
It is a string link version of Polyak's result, and by taking modulo $\triangle{(I)}$, our result coincides with Polyak's result. \\

Given a sequence $I$ of elements in $\{1,2,\cdots ,n\}$,  
$J<I$ will be used for any subsequence $J$ of $I$, possibly $I$ itself, 
and $|J|$ will denote the length of the sequence $J$.

Let $\sigma$ be an $n$-string link. 
Given a sequence $I=i_1i_2 \ldots i_n$ obtained by permuting $12 \ldots n$ and a subsequence $J=j_1j_2 \ldots j_k$ of $I$,
we define a knot $\overline{\sigma_{I,J}}$ as the closure of the product $b_I \cdot \sigma_J$. 
Here $\sigma_J$ is the $n$-string link obtained from $\sigma $ by replacing the $i$-th string with the trivial string underpassing all other components for all $i \in \{ 1, 2, \cdots , n \} \setminus \{ j_1,j_2, \cdots ,j_k\}$,
and $b_I$ is the $n$-braid in $[0,1] \times [0,1]$ associated with the permutation 
$ \displaystyle
    \bigl(\begin{smallmatrix}
    i_1 & i_2 &  \ldots & i_{n-1} & i_{n} \\
    i_2 & i_3 &  \ldots &  i_{n}  &  i_1
  \end{smallmatrix}\bigr)$
such that the string connecting the $i_m$-th point on $[0,1] \times \{0\}$ with the $i_{m+1}$-th point on $[0,1] \times \{1\}$ ($1 \leq m < n$)   
underpasses all strings connecting  the $i_{m'}$-th point on $[0,1] \times \{0\}$ with  the $i_{m'+1}$-th point on $[0,1] \times \{1\}$ ($m < m' < n$) and the $i_{n}$-th point on $[0,1] \times \{0\}$ with the $i_{1}$-th point on $[0,1] \times \{1\}$. 
See Figure \ref{stringlink1} for an example.
We then have the following Theorem. 

\begin{theorem}\label{main}
Let $\sigma$ be an $n$-string link ($n \geq 4$) with vanishing Milnor's link-homotopy 
invariants of length $\leq n-2$. 
Then for any sequence $I$ obtained by permuting $12 \ldots n$, 
we have 
\[ {\mu}_{\sigma}(I) = \frac{(-1)^{n-1}}{2^{n-1}(n-1)!} \sum_{J<I} (-1)^{|J|}P_0^{(n-1)}(\overline{\sigma_{I,J}} ;1), \]
where $P_0^{(n-1)}( \ \cdot \  ;1)$ is the $(n-1)$-th derivative of the 0-th coefficient polynomial $P_0( \ \cdot \ ; t)$ of the HOMFLYPT polynomial $P_0( \ \cdot \  ; t, z)$ evaluated at $t=1$. 
\end{theorem}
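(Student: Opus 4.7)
The plan is to adapt the clasper-theoretic strategy of Meilhan and Yasuhara \cite{MY} from the link setting to the string link setting. First, I would observe that both sides of the claimed identity are finite type invariants of $\sigma$ of degree $n-1$: the left-hand side by the Bar-Natan \cite{BN2} and Lin \cite{Lin} result cited in the introduction, and the right-hand side because $P_0^{(n-1)}(\,\cdot\,;1)$ behaves as a degree-$(n-1)$ Vassiliev invariant on knots (each derivative in $t$ at $t=1$ raising the degree by one on $P_0$), while the alternating sum $\sum_{J<I}(-1)^{|J|}$ only involves operations polynomial in the crossings of $\sigma$. Consequently both sides descend to functions on the $C_n$-equivalence classes of $\sigma$ in the sense of Habiro \cite{H}.

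The second step is to apply Habiro's clasper calculus, together with the Habiro--Meilhan classification of string links with vanishing Milnor invariants up to a given length, in order to reduce $\sigma$, modulo $C_n$-equivalence and link-homotopy, to a formal sum in an appropriate abelian monoid of elementary configurations: each summand is the trivial string link $\mathbf{1}_n$ together with a single simple tree clasper $T$ of degree $n-1$ whose $n$ leaves grab the $n$ strands in a prescribed combinatorial pattern. By additivity of both sides on such decompositions, the theorem reduces to verifying the identity on each elementary generator $\sigma = \mathbf{1}_n \cup T$.

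For such a generator, $\mu_\sigma(I)$ can be read off from the combinatorics of $T$ via the Habiro--Meilhan realization formula, yielding a signed count that depends on how the tree is compatible with the cyclic order encoded by $I$. On the HOMFLYPT side, the key observation is that if $J \subsetneq I$, then at least one strand is trivialized in $\sigma_J$, so one leaf of $T$ can be slid off that trivial strand; this makes $\overline{\sigma_{I,J}}$ effectively the result of a clasper of strictly lower degree, forcing $P_0^{(n-1)}(\overline{\sigma_{I,J}};1)=0$. Only the full-length term $J=I$ survives the alternating sum, and $\overline{\sigma_{I,I}}$ can be identified explicitly as a cabling-type closure of the tree clasper along the cyclic order of $b_I$, whose HOMFLYPT polynomial is then computable via the skein relation and standard derivative identities for $P_0$ at $t=1$.

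The main technical obstacle will be the normalization check: confirming that the prefactor $\frac{(-1)^{n-1}}{2^{n-1}(n-1)!}$ arises from combining the factor $2^{n-1}$ produced by the crossing-change expansion of a degree-$(n-1)$ tree clasper (each edge contributing a $2$), the $(n-1)!$ from the symmetrization inherent in the choice of $b_I$, and the sign from the $(n-1)$-st derivative of $P_0$ at $t=1$. This is the same flavor of bookkeeping carried out in \cite{MY} and refined in \cite{KY} for closed links, but the explicit combinatorics of the specific braid $b_I$ and of the subtraction scheme $\sum_J(-1)^{|J|}$ must be reworked in the string link context, and I expect the bulk of the effort to lie there rather than in the conceptual reduction.
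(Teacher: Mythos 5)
Your overall strategy (clasper normal form, evaluate both sides on generators, chase the normalization) is the right family of ideas and matches the paper's in spirit, but two of your key reductions fail as stated. First, under the hypothesis that only the Milnor link-homotopy invariants of length $\leq n-2$ vanish, you cannot reduce $\sigma$ to a union of single simple tree claspers of degree $n-1$ grabbing all $n$ strands. The correct normal form (Lemma~\ref{standerd form}, after Yasuhara) is $\sigma \sim l_{n-2}\cdot l_{n-1}$: a product of surgeries along trees of degree $n-2$ (with $n-1$ leaves on $n-1$ distinct strands, realizing the possibly nonzero length-$(n-1)$ invariants) \emph{and} trees of degree $n-1$. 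Your proposed generating set would only be available if invariants of length $\leq n-1$ vanished. Second, and relatedly, your claim that $P_0^{(n-1)}(\overline{\sigma_{I,J}};1)=0$ for every proper subsequence $J\subsetneq I$ is false for the actual $\sigma$: the degree-$(n-2)$ trees $T_M$ with $M<J$ survive the trivialization of the strands outside $J$ and contribute nontrivially. If your claim were true, the theorem would hold with the single term $J=I$ and no alternating sum. The whole point of the sum $\sum_{J<I}(-1)^{|J|}$ is an inclusion--exclusion: for a tree of weight $M$ with $M\neq I$, the identity $\sum_{M<J<I}(-1)^{|J|}=0$ kills its contribution, and only the contribution of $T_I$ (which requires all $n$ strands) survives, yielding $(-1)^{n-1}x_I\,2^{n-1}(n-1)!$ with $x_I=\mu_\sigma(I)$.

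A further gap: you propose to work ``modulo $C_n$-equivalence and link-homotopy,'' but the right-hand side of the formula is not a link-homotopy invariant of $\sigma$ ($P_0^{(n-1)}$ of the closed-up knots changes under self-crossing changes of $\sigma$). The paper therefore records the link-homotopy as an extra family $\mathcal{R}$ of claspers meeting some strand more than once, puts them in good position, and shows that their contributions $U_{(\mathcal{R}')_J}$ also cancel in the alternating sum because each such tree has weight supported on a proper subset of $\{1,\dots,n\}$. You would need this step (or a genuine argument that both sides are finite type invariants of degree $n-1$ of the string link itself, together with a correct spanning set of $C_n$-equivalence classes) to make your outline into a proof; additivity of $\mu_\sigma(I)$ under stacking is also not free and requires Lemma~\ref{additivity} with the vanishing hypothesis.
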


Note that the above vanishing assumption for a string link is equivalent to that any ($n-2$)-substring link is link-homotopic to the trivial string link.  \\

We also give the case of $\mu$-invariants of length 3 without the assumption. 

\medskip

\begin{theorem}\label{main2}
Let $\sigma$ be a $3$-string link and $I=i_1i_2i_3$ be a sequence obtained by permuting $123$.
We then have 
\begin{align*}
{\mu}_{\sigma}(I) = - \sum_{J<I} (-1)^{|J|} a_{2}(\overline{\sigma_{I,J}}) -lk_\sigma(i_1i_2)lk_\sigma(i_2i_3) + A_{I},
 \end{align*}
where $a_2$ is the second coefficient of the Conway polynomial, 
$lk_\sigma (ij)$ is the linking number of the $i$-th component and  $j$-th component of $\sigma$, and  
\begin{align*}
A_{I} = 
\begin{cases}
    lk_\sigma(i_1i_2)  & (I=312) \\ 
    -lk_\sigma(i_1i_2) & (I=132) \\ 
    0 & (otherwise). 
\end{cases}
 \end{align*}
\end{theorem}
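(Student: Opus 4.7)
The plan is to follow the clasper-theoretic approach of \cite{MY} and \cite{KY}, adapted to 3-string links without any vanishing hypothesis on linking numbers. Both $\mu_\sigma(I)$ (for $|I|=3$) and each $a_2(\overline{\sigma_{I,J}})$ are Vassiliev invariants of degree $\leq 2$ in $\sigma$, while linking numbers are of degree $1$; hence the right-hand side is a polynomial of degree at most $2$ in invariants of $\sigma$. By Habiro's clasper theory \cite{H}, it suffices to verify the identity on a generating set of 3-string links modulo $C_3$-equivalence, namely on products of $C_1$-trees (simple crossings contributing to linking numbers) together with a single $C_2$-tree (a Y-clasper realizing $\mu(123)=\pm 1$).

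I would treat the $C_2$-tree case first. For a $\sigma$ whose only nontrivial invariant of degree $\leq 2$ is $\mu_\sigma(123)$, the correction term $A_I$ and the linking product both vanish, and the claimed formula reduces to the string-link analogue of Polyak's identity \cite{P}. This can be verified by a direct computation on the elementary Y-clasper, using the skein relation for the Conway polynomial and the explicit construction of the closures $\overline{\sigma_{I,J}}$. Next I would address the purely linking case, in which $\sigma$ is a product of $C_1$-trees. Here $\mu_\sigma(I)=0$, so the signed sum of $a_2$'s on the right must cancel $-lk_\sigma(i_1i_2)\,lk_\sigma(i_2i_3) + A_I$. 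The $a_2$ of each closure $\overline{\sigma_{I,J}}$ acquires a cross-term between two linkings via the skein relation, and summing over the eight subsequences $J<I$ with signs $(-1)^{|J|}$ should produce exactly this cancellation.

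The main obstacle will be this second calculation, and especially the origin of the case-dependent term $A_I$. One must enumerate the subsequences $J < I$, determine for each the linking pattern of the knot $b_I \cdot \sigma_J$ after closure (the braid $b_I$ re-orders the strands cyclically, and replacing some components by trivial underpassing strings changes which pairs of strands end up being nontrivially linked), and track the resulting $a_2$ contributions. I expect the three-case split ($I=312$, $I=132$, otherwise) to arise from how the permutation $I$ determines which pair $(i_1,i_2)$ is singled out by the braid $b_I$; this encodes an asymmetry in the definition of $b_I$. Once both generator cases are verified and additivity modulo $C_3$-equivalence is checked, the general formula follows.
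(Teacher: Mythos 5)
Your overall framework --- decomposing $\sigma$ modulo $C_3$-equivalence (in fact link-homotopy) into a product of clasps realizing the linking numbers together with a $Y$-clasper part realizing the residual triple invariant, and then evaluating both sides on the pieces --- matches the paper's strategy, which invokes Lemma~\ref{standerd form} to write $\sigma$ as $l_1\cdot l_2$ and converts each closure $\overline{\sigma_{I,J}}$ into a connected sum up to $C_3$-equivalence. Your first step (the single $C_2$-tree case, reducing to Polyak's identity) is sound and corresponds to the term $|x_I|\,a_2(U_{T_I^\epsilon})$ in the paper.

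However, your treatment of the purely linking case rests on a false premise that would derail the proof: for a $3$-string link that is a product of $C_1$-trees, $\mu_\sigma(i_1i_2i_3)$ is \emph{not} zero in general. The length-$3$ $\mu$-invariant of a string link is an honest integer (it is the closed-up link's $\overline{\mu}$ that is only defined modulo linking numbers), and the Magnus expansion of a longitude of an ordered product of clasps produces quadratic terms. The paper computes $\mu_{l_1}(I)= lk_\sigma(i_1i_2)lk_\sigma(i_1i_3)+lk_\sigma(i_1i_3)lk_\sigma(i_2i_3)-lk_\sigma(i_1i_2)lk_\sigma(i_2i_3)+A_I$, which is generically nonzero (e.g.\ it equals $-1$ when $lk_\sigma(12)=lk_\sigma(23)=1$, $lk_\sigma(13)=0$ and $I=123$). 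So the signed sum of $a_2$'s does not ``cancel'' $-lk_\sigma(i_1i_2)lk_\sigma(i_2i_3)+A_I$; rather, both sides of the identity equal this nonzero $\mu_{l_1}(I)$. In particular, $A_I$ does not arise from the braid $b_I$ or from cross-terms in the $a_2$ computation --- those contribute only the products $x_{i_1i_2}x_{i_1i_3}+x_{i_1i_3}x_{i_2i_3}$ --- but from the Magnus-expansion computation of $\mu_{l_1}(I)$ itself, reflecting the ordering of the clasps and the geometric convention for the trees $T_M$. To close the gap you would need to carry out that computation, and to replace the appeal to generic ``additivity'' (insufficient for an identity quadratic in linking numbers) by the controlled bookkeeping $x_I=\mu_\sigma(I)-\mu_{l_1}(I)$ together with Lemma~\ref{additivity}.
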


\begin{remark}
For any $n$-string link, we can give essentially the same results as Theorem~\ref{main} and Theorem~\ref{main2} for  $\mu$-invariant of length less than $n$ by ignoring $i$-th strings such that $i$ does not apper in the sequence $I$,
because we have that $\mu_\sigma(I)= \mu_{\cup \sigma_i} (I)$, where  $\cup\sigma_i$ means a disjoint union of $\sigma_i$ such that $i$ appears in the sequence $I$.
\end{remark}

\begin{remark}
This operation from a string link to a knot corresponds to $Y$-graph sum of links defined by M. Polyak.
By taking this formula modulo $\Delta _{\overline{\sigma_{I,J}}}(I)$, we get Polyak's relation between Milnor's $\overline{\mu}$-invariants and Conway polynomials \cite{P}. 
\end{remark}

\begin{remark}
K. Taniyama gave a formula expressing Milnor's $\overline{\mu}$-invariants of length 3 for links by the second coefficient of the Conway polynomial assuming that all linking numbers vanish in \cite{T}.
\end{remark}

\begin{remark}
In \cite{M}, J.B. Meilhan showed that all finite type invariants of degree 2 for string links was given a formula by some invariants (Theorem 2.8).
So the formula in Theorem~\ref{main2} could also be derived from \cite{M}.  
\end{remark}

\section*{Acknowledgements} 
The author thanks Professor Sadayoshi Kojima for comments and suggestions.
She also thanks Professor Akira Yasuhara for discussions and comments. 
She also thanks Professor Michael Polyak for valuable advices. 
She also thanks Professor Jean-Baptiste Meilhan for many useful comments. 
She also thanks Professor Kouki Taniyama for comments. 
\section{Some known results}

\medskip
\subsection{String link} 
Let $n$ be a positive integer and $D^2 \subset  \mathbb{R}^2$ the unit disk equipped with $n$ marked points $x_1, x_2, \cdots, x_n$ in its interior, lying in the diameter on the $x$-axis of  $\mathbb{R}^2$ as illustrated in Figure \ref{stringlink}.  
Let $I=[0,1]$. 
An {\em $n$-string link} $\sigma$ is the image of a proper embedding 
$\sqcup_{i=1}^n I_i \rightarrow D^2 \times I$ of the disjoint union of $n$ copies of $I$ in $D^2 \times I$, 
such that $\sigma|_{I_i}(0)=(x_i, 0)$ and $\sigma|_{I_i}(1)=(x_i, 1)$ for each $i$ as illustrated in Figure \ref{stringlink}. 
Each string of a string link inherits an orientation from the usual orientation of $I$. 
The $n$-string link $\{x_1,x_2,\cdots,x_n\}\times I$ in $D^2 \times I$ is called the {\em trivial $n$-string link} 
and denoted by ${\bf 1}_n$ or ${\bf 1}$ simply.

\begin{figure}[h]
  \begin{center}
\includegraphics[width=.6\linewidth]{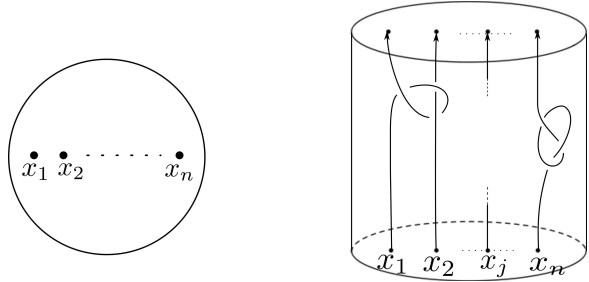}
    \caption{An n-string link}
    \label{stringlink}
  \end{center}
\end{figure}

Given two $n$-string links $\sigma$ and $\sigma'$, we denote their product by $\sigma \cdot \sigma'$, 
which is given by stacking $\sigma'$ on the top of $\sigma$ and reparametrizing the ambient cylinder $D^2 \times I$. 
By this product, the set of isotopy classes of $n$-string links has a monoid structure with unit given by the trivial string link ${\bf 1}_n$.  
Moreover, the set of link-homotopy classes of $n$-string links is a group under this product. \\

\subsection{Claspers}
The theory of claspers was introduced by K. Habiro \cite{H}.  
Here, we define only simple tree clasper. 
For a general definition, we refer the reader to \cite{H}.  

Let $L$ be a (string) link.  
A disk $T$ embedded in $S^3$ (or $D^2\times I$) is called a {\em simple tree clasper} (we will call it {\em tree clasper} or {\em tree}, simply in this paper) for $L$ if it 
satisfies the following four conditions:
\begin{enumerate} 
\item The disk $T$ is decomposed into disks and bands.
Here, the band connects two distinct disks, and are called {\it edges}. 
\item The disks attach either 1 or 3 edges. We call a disk attached with only one edge a {\em leaf}.
\item The disk $T$ intersects the (string) link $L$ transversely and the intersections are contained in the interiors of the leaves. 
\item Each of leaves of $T$ intersects $L$ at exactly one point.   
\end{enumerate}
A simple tree clasper $T$ with $k+1$ leaves is called a tree clasper of {\it degree} $k$ or \emph{$C_k$-tree}.

Given a $C_k$-tree $T$ for a (string) link $L$, there exists a procedure to construct a
framed link $\gamma(T)$ in a regular neighborhood of $T$. 
We call surgery along $\gamma(T)$ {\em surgery along} $T$. 
Because there is an orientation-preserving homeomorphism, fixing the boundary, 
from the regular neighborhood $N(T)$ of $T$ to the manifold obtained from $N(T)$ by surgery along $T$, 
the surgery along $T$ can be regarded as a local move on $L$. 
We denote by $L_T$ a (string) link obtained from $L$ by surgery along $T$. 
For example, surgery along a $C_k$-tree is a local move as illustrated in Figure~\ref{Ck-tree}.
In this paper, the drawing convention for $C_k$-trees are those of \cite[Figure 7]{H}.
Similarly, let $T_1 \cup \cdots \cup T_m$ be a disjoint union of trees for $L$, 
we can define $L_{T_1 \cup \cdots \cup T_m}$ as the link obtained by surgery along $T_1 \cup \cdots \cup T_m$. 
 
\begin{figure}[h]
  \begin{center}
\includegraphics[width=.8\linewidth]{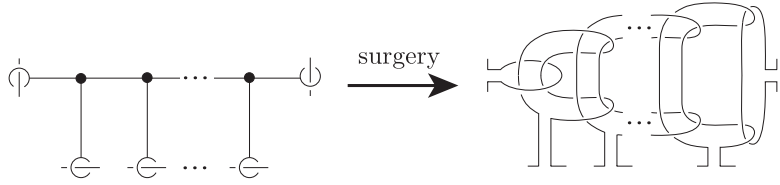}
    \caption{Surgery along a simple tree clasper}
    \label{Ck-tree}
  \end{center}
\end{figure}

The $C_k$-equivalence is an equivalence relation on (string) links generated by surgeries along $C_k$-tree claspers and ambient isotopy.

\subsection{Milnor's $\mu$-invariant for string links}
Let $\sigma=\cup^n_{i=1} \sigma_i$ in $D^2 \times I$ be an $n$-string link.
We consider the fundamental group $\pi_1(D^2 \times I \setminus \sigma)$ of the complement of $\sigma$ in $D^2 \times I$, where we choose a point $b$ as  a base point and curves $\alpha_1, \cdots , \alpha_n$ as meridians in Figure \ref{longi}.

\begin{figure}[h]
  \begin{center}
\includegraphics[width=.6\linewidth]{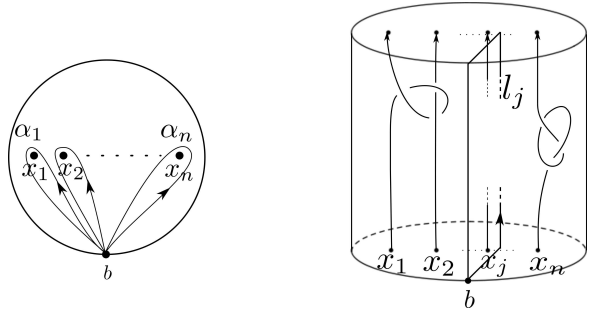}
    \caption{Longitude of string link}
    \label{longi}
  \end{center}
\end{figure}

By Stallings' theorem \cite{S}, for any positive integer $q$, the inclusion map
\[ \iota : D^2 \times \{0\} \setminus \{ x_1, \cdots , x_n \} \longrightarrow  D^2 \times I \setminus \sigma \]
induces an isomorphism of the lower central series quotients of the fundamental groups    
\[ \iota_* : \frac{ \pi_1(D^2 \times \{0\} \setminus \{ x_1, \cdots , x_n \}) }{ ( \pi_1(D^2 \times \{0\} \setminus \{ x_1, \cdots , x_n \}) )_q } \longrightarrow  \frac{ \pi_1(D^2 \times I \setminus \sigma) }{ \pi_1(D^2 \times I \setminus \sigma)_q}, \]
where given a group $G$, $G_q$ means the $q$-th lower central subgroup of  $G$. 
The fundamental group $\pi_1(D^2 \times \{0\} \setminus \{ x_1, \cdots , x_n \})$ is a free group generated by $\alpha_1, \cdots, \alpha_n$.
We then consider the $j$-th longitude $l_j$ of $\sigma$ in $D^2 \times I$, where $l_j$ is the closure of the preferred parallel curve of $\sigma_j$, whose endpoints lie on the $x$-axis in $D^2 \times \{0,1\} $ as in Figure \ref{longi}.
We then consider the image of the longitude $\iota_*^{-1}(l_j)$ by the Magnus expansion and denote $\mu(i_1, \cdots, i_k,j)$ the coefficient of $X_{i_1}X_{i_2} \cdots X_{i_k}$ in the Magnus expansion.

\begin{theorem}[\cite{HL}]
For any positive integer $q$, if $k<q$, then $\mu(i_1, \cdots, i_k,j)$ is invariant under isotopy.
Moreover, if the sequence $i_1, \cdots, i_k,j$ is of distinct numbers, then $\mu(i_1, \cdots, i_k,j)$ is also link-homotopy invariant.
\end{theorem}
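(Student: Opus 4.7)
The plan is to reduce both claims to well-known properties of the Magnus expansion via the Stallings isomorphism stated immediately above, following Milnor's classical argument for links as adapted to string links by Habegger and Lin. The key structural point is that, because a string link has fixed boundary on $D^2 \times \{0,1\}$, the basepoint $b$ and each longitude $l_j$ are canonical elements of $\pi_1(D^2 \times I \setminus \sigma)$, so no indeterminacy $\Delta(I)$ appears and the $\mu$-coefficients are honest integers.

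For the isotopy invariance, I would first note that an ambient isotopy of $\sigma$ rel boundary induces a self-homeomorphism of the complement that fixes the meridian system $\alpha_1,\dots,\alpha_n$ and preserves $l_j$ as an element of $\pi_1(D^2\times I \setminus \sigma, b)$. By Stallings' theorem cited above, $\iota_*^{-1}(l_j)$ is then a well-defined element of $F_n/(F_n)_q$, where $F_n$ is the free group on $\alpha_1,\dots,\alpha_n$. Combined with the standard fact that the Magnus expansion sends $(F_n)_q$ into the ideal of power series whose monomials have degree $\geq q$, this shows that the coefficient of any monomial $X_{i_1}\cdots X_{i_k}$ with $k<q$ depends only on the image of $l_j$ in $F_n/(F_n)_q$, and is therefore an isotopy invariant.

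For the link-homotopy invariance when $i_1,\dots,i_k,j$ are all distinct, it suffices to control the effect of a single self-crossing change on some strand $\sigma_i$, since link-homotopy is generated by isotopy and such moves. Working in a Wirtinger-type presentation of the complement, I would show that a self-crossing change on $\sigma_i$ modifies the longitude $l_i$ by a factor of the form $[\alpha_i, g\alpha_i g^{-1}]$ for some $g\in F_n$, and modifies any other longitude $l_m$ ($m\neq i$) only by conjugation by a word in $\alpha_i$, which modulo $(F_n)_q$ again produces a correction built from commutators of the same shape. One then passes to Milnor's reduced free group $RF_n := F_n/\langle\!\langle [\alpha_i, g\alpha_i g^{-1}] : i,g\rangle\!\rangle$ and observes that the Magnus expansion of each killed commutator contains only monomials in which $X_i$ appears at least twice. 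Hence the coefficient of $X_{i_1}\cdots X_{i_k}$ in $\iota_*^{-1}(l_j)$ is unaffected by the move whenever $i_1,\dots,i_k,j$ are distinct, giving the link-homotopy invariance.

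The main obstacle is the bookkeeping in the third step: writing down precisely how each longitude transforms under a single self-crossing change in a chosen presentation of the complement, and checking that every correction term genuinely lies in the normal subgroup of $F_n$ killed when passing to $RF_n$. This is the delicate piece of the Habegger--Lin argument, and it relies on the fact that the map from $RF_n$ into the quotient of the Magnus algebra obtained by setting to zero all monomials with a repeated index is well-defined and captures exactly the coefficients appearing in the statement.
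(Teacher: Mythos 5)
This statement is quoted from Habegger--Lin \cite{HL}; the paper offers no proof of it, so there is no internal argument to compare against. Your sketch follows the standard Milnor/Habegger--Lin line, and the parts you actually carry out are sound: for a string link the basepoint and the longitudes are canonical (no $\Delta(I)$ indeterminacy), Stallings' theorem identifies the $q$-th nilpotent quotient of $\pi_1(D^2\times I\setminus\sigma)$ with $F_n/(F_n)_q$, and since the Magnus expansion maps $(F_n)_q$ to $1$ plus terms of degree $\geq q$, the coefficient of $X_{i_1}\cdots X_{i_k}$ with $k<q$ depends only on the class of $\iota_*^{-1}(l_j)$ in $F_n/(F_n)_q$. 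That gives isotopy invariance essentially completely.

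The link-homotopy half, however, is a plan rather than a proof, and you say so yourself. The step you defer is exactly the substantive one: showing that a single self-crossing change on $\sigma_i$ alters \emph{every} longitude only by elements of the normal closure of the commutators $[g_1\alpha_i g_1^{-1}, g_2\alpha_i g_2^{-1}]$, whose Magnus expansions involve only monomials with $X_i$ repeated, hence cannot affect the coefficient of a square-free monomial. Your description of the effect on $l_m$ for $m\neq i$ as ``only conjugation by a word in $\alpha_i$'' is not accurate as stated: the crossing change re-expresses the Wirtinger generators along strand $i$, so $l_m$ changes by substituting one conjugate of $\alpha_i$ for another, not merely by an overall conjugation; one must check that the discrepancy between the two conjugating elements lies in the relevant normal subgroup. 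Until that bookkeeping is done (this is the content of Milnor's Theorem~2 in \cite{Milnor} and of the corresponding string-link statement in \cite{HL}), the second assertion of the theorem is not established. So: right approach, first claim essentially proved, second claim left with an acknowledged but genuine gap.
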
 
We call this invariant {\it Milnor's $\mu$-invariant}.  
The following theorem and lemma play important roles in calculating Milnor's invariants.

\begin{theorem}[{\cite[Theorem 7.2]{H}}] \label{MilnorC_k} 
The Milnor's invariants of length less than or equal to $k$ for (string) links are invariants of $C_k$-equivalence. 
\end{theorem}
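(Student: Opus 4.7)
The plan is to verify that a single $C_k$-tree surgery preserves all Milnor invariants of length $\le k$; since $C_k$-equivalence is generated by such surgeries together with ambient isotopies (which are harmless), this reduction gives the theorem.

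Let $F$ denote the free group on the meridians $\alpha_1,\ldots,\alpha_n$ and $F_k$ its $k$-th lower central subgroup. By the Magnus-expansion definition of $\mu$-invariants recalled above, together with Stallings' theorem, a Milnor invariant of length $m$ coincides with the coefficient of a degree-$(m-1)$ Magnus monomial in $\iota_\ast^{-1}(l_j)$, and is therefore determined by the class of $l_j$ in $F/F_m$. In particular, all Milnor invariants of length $\le k$ are determined by the classes of the longitudes $l_1,\ldots,l_n$ in $F/F_k$. It thus suffices to show that surgery along a single $C_k$-tree $T$ changes each longitude $l_j$ by an element of $F_k$.

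The key claim is that if $T$ has $k+1$ leaves encircling meridians $m_0,\ldots, m_k$, then surgery along $T$ alters each $l_j$ by a product of iterated commutators of the $m_i$'s of bracket depth $k-1$, and each such commutator lies in $F_k$. I would establish this by induction on $k$ using Habiro's calculus of claspers: a $C_k$-tree admits an edge-cut reducing it to a configuration of two $C_{k-1}$-trees whose surgery effects no longer commute, and the commutator operation associated with the cut raises the lower-central-series depth by exactly one. The base case $k=1$ is trivial, since a crossing change modifies longitudes only within $F=F_1$. Once this depth control is in hand, the classes of the $l_j$'s in $F/F_k$ are unchanged by the surgery, so all Magnus coefficients of degree $\le k-1$ agree for $L$ and $L_T$.

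The main obstacle is precisely this inductive commutator bookkeeping. The argument requires translating Habiro's topological clasper moves (the edge-cut and the associated Borromean-type local surgery) into algebraic identities in $\pi_1$ of the complement, and carefully matching the clasper degree with the depth in the lower central series. Subtleties arise because a single $C_k$-tree can be manipulated into several equivalent clasper configurations via IHX- and STU-type identities, so one must check that the commutator depth is invariant under these moves. Once this translation is set up, the remainder of the proof is routine: the Magnus expansion identifies $F_k$ with power series starting in degree $\ge k$, so all Magnus coefficients of degree $\le k-1$, hence all Milnor invariants of length $\le k$, are left untouched by the surgery.
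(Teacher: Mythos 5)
This statement is quoted from Habiro \cite{H} (Theorem 7.2) and the paper offers no proof of it, so there is no in-paper argument to compare against; I can only assess your sketch on its own terms. Your overall reduction is the standard and correct one: Milnor's invariant of length $m$ is a degree-$(m-1)$ coefficient of the Magnus expansion of a longitude, an element of $F_k$ has Magnus expansion $1+(\text{terms of degree}\geq k)$, so it suffices to show that surgery along a single $C_k$-tree multiplies each longitude by an element of $F_k$ (and that ambient isotopy is harmless). The numerology also checks out: the $k$ leaves not lying on the $j$-th component contribute a weight-$k$ iterated commutator of meridians, which lies in $F_k$ and hence does not disturb Magnus coefficients of degree $\leq k-1$.

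The genuine gap is that this ``key claim'' is the entire content of the theorem, and your plan for it is both incomplete and slightly off. Cutting an internal edge of a $C_k$-tree does not produce two $C_{k-1}$-trees: Habiro's splitting of a $C_k$-tree along an edge yields a $C_{k_1}$-tree and a $C_{k_2}$-tree with $k_1+k_2=k$, and the induction has to be run on that bigraded decomposition, with the commutator of the two surgery effects accounting for the additivity of weights in the lower central series. You also need to address a point you pass over silently: surgery along $T$ changes the ambient complement, so ``the longitude changes by an element of $F_k$'' only makes sense after identifying the nilpotent quotients of $\pi_1$ of the two complements; this works because the surgery is supported in a ball and both quotients are identified with the free nilpotent group via Stallings' theorem applied to the same bottom disk, but it must be said. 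Finally, note that a much shorter proof is available from results the paper already quotes: Milnor invariants of length $k$ are finite type of degree $k-1$ (\cite{BN2}, \cite{Lin}), and finite type invariants of degree $\leq k-1$ are $C_k$-equivalence invariants (\cite{H}) --- this is exactly the argument the paper uses for $P_0^{(n-1)}$ in Step 2 of the proof of Theorem \ref{main}, and it bypasses the commutator bookkeeping entirely.
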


\begin{lemma}[{\cite[Lemma 3.3]{MY1}}] \label{additivity} 
Let $\sigma$ and $\sigma'$ be $n$-string links. 
Let $h$ and $h'$ be the integers. 
If $\mu_{\sigma}(I)=0$ for any $I$ with $|I| \leq h$ 
and $\mu_{\sigma'}(I')=0$ for any $I'$ with $|I'| \leq h'$, 
then for any $J$ with $|J| \leq h+h'$
\[ \mu_{\sigma \cdot \sigma'}(J)= \mu_{\sigma}(J) + \mu_{\sigma'}(J). \\ \]  
\end{lemma}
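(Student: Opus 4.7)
The plan is to prove the additivity directly via the Magnus expansion that defines the $\mu$-invariants. Writing $F_n = \langle \alpha_1,\ldots,\alpha_n\rangle$ for the free group on meridians, $R = \mathbb{Z}\langle\langle X_1,\ldots,X_n\rangle\rangle$ for the Magnus algebra with augmentation ideal $M$, and $E\colon F_n\to R$ for the Magnus expansion with $E(\alpha_i)=1+X_i$, the invariant $\mu_\sigma(i_1\ldots i_k j)$ is by definition the coefficient of $X_{i_1}\cdots X_{i_k}$ in $E(\iota_*^{-1}(l_j(\sigma)))$. Thus the hypothesis on $\sigma$ translates into $E(\iota_*^{-1}(l_j(\sigma))) \in 1 + M^h$ and, similarly, $E(\iota_*^{-1}(l_j(\sigma'))) \in 1 + M^{h'}$, for every $j$; the conclusion to be proved is the additivity of these power series modulo $M^{h+h'}$.

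The second ingredient is the Artin-type composition law for string link longitudes. In the nilpotent quotient $F_n/(F_n)_q$ (with $q$ larger than $h+h'$) the stacking operation obeys
\[ \iota_*^{-1}(l_j(\sigma\cdot\sigma')) \;=\; \iota_*^{-1}(l_j(\sigma))\cdot \phi_\sigma\bigl(\iota_*^{-1}(l_j(\sigma'))\bigr), \]
where $\phi_\sigma$ is the endomorphism induced by $\sigma$, sending each meridian $X_i$ to the conjugate $l_i(\sigma)^{-1} X_i\, l_i(\sigma)$ (since meridians at different heights along the $i$-th strand differ by conjugation by the piece of longitude between them). The vanishing hypothesis on $\sigma$ therefore forces $E(\phi_\sigma(X_i)) \equiv 1 + X_i \pmod{M^{h+1}}$, because $E(\phi_\sigma(X_i)) - (1+X_i)$ is essentially the commutator $[X_i,\,E(l_i(\sigma))]$, which lies in $M^{h+1}$.

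The computation now closes in a few lines. Set $Y_i := E(\phi_\sigma(X_i))-1$, so that $Y_i \equiv X_i \pmod{M^{h+1}}$. For a monomial $X_I$ of Magnus degree $d$, expanding $Y_I = \prod_{k}(X_{i_k} + (Y_{i_k}-X_{i_k}))$ shows $Y_I - X_I \in M^{d+h}$. Applying this termwise to $E(l_j(\sigma')) = 1 + \sum_{|I|\ge h'}\mu_{\sigma'}(Ij)\,X_I$ gives $E(\phi_\sigma(l_j(\sigma'))) \equiv E(l_j(\sigma')) \pmod{M^{h+h'}}$. Multiplying by $E(l_j(\sigma)) = 1 + \lambda_j$ with $\lambda_j\in M^h$ and noting that the cross term $\lambda_j\bigl(E(l_j(\sigma'))-1\bigr)$ lies in $M^{h+h'}$, we obtain
\[ E(l_j(\sigma\cdot\sigma')) \;\equiv\; E(l_j(\sigma)) + E(l_j(\sigma')) - 1 \pmod{M^{h+h'}}. \]
Reading off the coefficient of any monomial of degree $\le h+h'-1$ yields $\mu_{\sigma\cdot\sigma'}(J) = \mu_\sigma(J) + \mu_{\sigma'}(J)$ for all $J$ with $|J|\le h+h'$.

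The main obstacle is the precise formulation of the Artin-type composition formula and extracting from it the fact that the nonidentity part of $\phi_\sigma$ is controlled, in Magnus degree, by the longitudes of $\sigma$; once this bookkeeping is in place, the remainder is pure power-series algebra organised by powers of the augmentation ideal.
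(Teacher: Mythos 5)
Your argument is correct and complete: the paper itself gives no proof of this lemma, citing it from \cite{MY1}, and your Magnus-expansion computation (vanishing hypotheses as $E(l_j)\in 1+M^h$, the Artin-type composition law $l_j(\sigma\cdot\sigma')=l_j(\sigma)\cdot\phi_\sigma(l_j(\sigma'))$, and the estimate $Y_I-X_I\in M^{d+h}$) is essentially the standard proof found in that reference. No gaps; the degree bookkeeping (sequences of length $\le h+h'$ correspond to monomials of degree $\le h+h'-1$) is handled correctly.
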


\subsection{HOMFLYPT polynomial}


The \emph{HOMFLYPT polynomial} $P(L;t,z)\in {\Bbb Z}[t^{\pm 1},z^{\pm 1}]$ of an oriented link $L$ 
is defined by the following two formulas:
\begin{enumerate}
\item $P(U;t,z) = 1$, and 
\item $t^{-1}P(L_+;t,z) - tP(L_- ;t,z) = zP(L_0 ;t,z)$, 
\end{enumerate}
where $U$ denotes the trivial knot and $L_+$, $L_-$ and $L_0$ are link diagrams which are identical everywhere except near one crossing, where they look as follows:
\[L_+=\begin{array}{c}
\includegraphics[width=.07\linewidth]{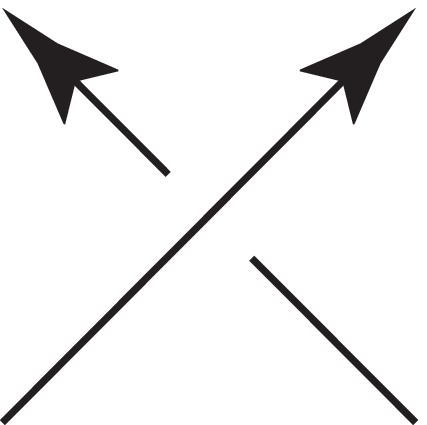}\end{array}~~;~~~~~~
L_-=\begin{array}{c}
\includegraphics[width=.07\linewidth]{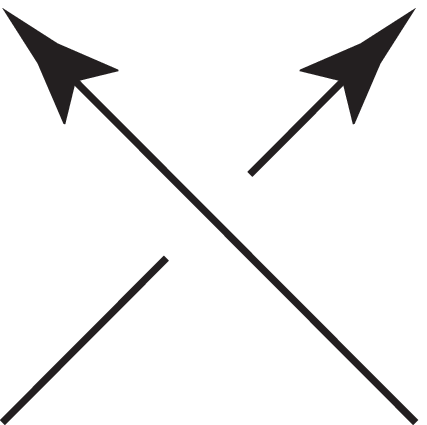}\end{array}~~;~~~~~~
L_0=\begin{array}{c}
\includegraphics[width=.07\linewidth]{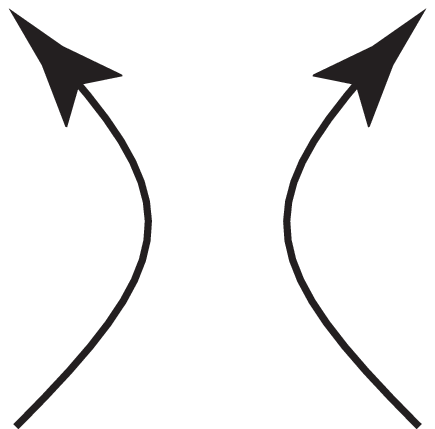}\end{array}~~.
\]

Recall that the HOMFLYPT polynomial of a knot $K$ is of the form $P(K;t,z)=\sum_{k=0}^{N} P_{2k}(K;t)z^{2k}$, where $P_{2k}(K;t)\in {\Bbb Z}[t^{\pm 1}]$ is called the $2k$-th coefficient polynomial of $K$.  

%

It is known that the HOMFLYPT polynomial of knots is multiplicative under connected sum. So $P_0(K;t)$ is also multiplicative under connected sum. 
For any knots $K$ and $K'$ and any integer $n$, we have 
\[ P_0^{(n)}(K \sharp K';1) = P_0^{(n)}(K;1) + P_0^{(n)}(K';1) + \sum_{k=2}^{n-2} \binom{n}{k} P_0^{(k)}(K;1) P_0^{(n-k)}(K';1), \]
because $P_0(K ;1)=1$ and $P_0^{(1)}(K ;1)=0$ for any knot $K$. 
Moreover, if the knot $K'$ is $C_{n-1}$-equivalent to the trivial knot, we have 
\begin{align}
 P_0^{(n)}(K \sharp K';1) =  P_0^{(n)}(K;1) + P_0^{(n)}(K';1), \label{connectedsum}
 \end{align}
because finite type invariants of degree $ < n-1$ are $C_{n-1}$-equivalence  invariants.

\section{Proof of Theorem~\ref{main} and Theorem~\ref{main2}}

\subsection{Preparation} 

Let $I=i_1i_2 \ldots i_n$ be a sequence obtained by permuting $ 1 2 \ldots n $.
Define the order $<_I$ on the integers from 1 to $n$ such that $i_1 <_I i_2 <_I \cdots <_I i_n$.
Let $M : \{ 1, 2, \cdots , k\} \longrightarrow  \{ 1, 2, \cdots , n\}$ ($k \leq n$) be an injection such that $M(1) <_I M(j) <_I M(k)$ ($j \in \{ 2, \cdots , k-1\}$) and let $\mathcal{M}_{I,k}$ be the set of such injections. Consider an element of $\mathcal{M}_{I,k}$ as a sequence of length $k$.

For any $M \in \mathcal{M}_{I,k}$, let $T_M$ and $T_M^{-1}$ be $C_{k-1}$-trees as illustrated in Figure \ref{C_k-trees}.
Here, Figure \ref{C_k-trees} are the images of homeomorphisms from the neighborhoods of $T_M$ and $T_M^{-1}$ to the 3-balls and $\oplus$ means a positive half-twist.
Although $({\bf 1}_n )_{T_M}$ and $({\bf 1}_n )_{T_M^{-1}}$ are not unique up to ambient isotopy, by {\cite[Lemma 2.1, 2.4]{Y}}, it is unique up to $C_k$-equivalence.
Therefore for any $M \in \mathcal{M}_{I,k}$, we may choose $({\bf 1}_n )_{T_M}$ and $({\bf 1}_n )_{T_M^{-1}}$ uniquely up to $C_k$-equivalence.
In particular, we may choose $({\bf 1}_n )_{T_M}$ and $({\bf 1}_n )_{T_M^{-1}}$ so that each component of ${\bf 1}_n$ underpasses all edges of the trees.

\begin{figure}[h]
  \begin{center}
\includegraphics[width=1.\linewidth]{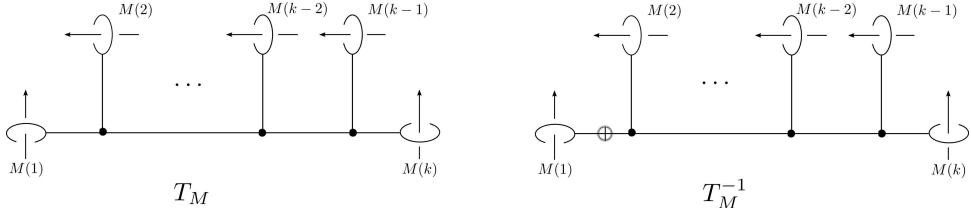}
    \caption{$T_M$ and $T_M^{-1}$}
    \label{C_k-trees}
  \end{center}
\end{figure}

%

We have the following lemma proved by the same method of {\cite[Lemma 4.2, Theorem 4.3]{Y}}.

\begin{lemma}[cf. {\cite[Theorem 4.3]{Y}}, {\cite[Theorem 4.1]{MY}}] \label{standerd form}
Let $\sigma$ be an $n$-string link and $I=i_1i_2 \ldots i_n$ be a sequence obtained by permuting $ 1 2 \ldots n $.
Then $\sigma$ is link-homotopic to a string link $l_1 \cdot  l_2 \cdot \cdots \cdot l_{n-1}$, where 
\[
  l_k= \prod_{M\in \mathcal{M}_{I,k+1}} {({\bf 1}_{T_M^{\epsilon}})}^{|x_M|}, 
    x_M =\left\{ 
          \begin{array}{ll}
               \mu_\sigma(M) & \text{ if } k=1 \\
                 \mu_\sigma(M)-\mu_{l_1 l_2\cdots l_{k-1}}(M)  & \text{ if } k \geq 2,  
          \end{array} 
            \right. 
\]            
where $M$ in the product appears in the lexicographic order of $\mathcal{M}_{I,k+1}$ for the order $<_I$, and $\epsilon=1$ if $x_M >0$ and otherwise $\epsilon=-1$.
\end{lemma}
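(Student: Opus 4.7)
The plan is to prove the lemma by induction on the degree $k$, following the method of Yasuhara in \cite{Y}: one builds up the string link degree by degree using the standard tree claspers $T_M$, with the multiplicities at each stage calibrated by Milnor invariants.

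The foundational step is the realization property of $T_M$: for any $M \in \mathcal{M}_{I,k+1}$, the string link $({\bf 1}_n)_{T_M}$ should satisfy $\mu(M) = +1$ while $\mu(M') = 0$ for every sequence $M'$ with $|M'| \leq k$; the length-$(k+1)$ Milnor invariants not indexed by $\mathcal{M}_{I,k+1}$ are then determined by the $\mu(M)$ via the shuffle relations together with the vanishing of shorter invariants. Analogously, $T_M^{-1}$ realizes $\mu(M) = -1$. This is the analog of \cite[Lemma 4.2]{Y} and follows from a direct clasper-calculus computation using the Magnus expansion along the tree.

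Granted this realization property, the induction proceeds as follows. By the clasper calculus for link-homotopy in \cite{H} combined with \cite{Y}, any string link is link-homotopic to a product of tree clasper surgeries on ${\bf 1}_n$ whose leaves all lie on distinct strings, and each such tree may be put in the form $T_M^{\pm 1}$ up to $C_k$-equivalence. For the base case $k=1$, set $l_1 = \prod_{M\in \mathcal{M}_{I,2}} ({\bf 1}_{T_M^{\epsilon}})^{|x_M|}$ with $x_M = \mu_\sigma(M)$; then $\mu_{l_1}(M) = \mu_\sigma(M)$ for $|M|=2$ by the additivity of linking numbers. For the inductive step, assume $l_1,\ldots,l_{k-1}$ have been chosen so that $\mu_{l_1 \cdots l_{k-1}}(M) = \mu_\sigma(M)$ for every $|M| \leq k$. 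Define $l_k$ with $x_M = \mu_\sigma(M) - \mu_{l_1\cdots l_{k-1}}(M)$ for $M \in \mathcal{M}_{I,k+1}$. Since each ${\bf 1}_{T_M^{\pm}}$ is $C_k$-equivalent to ${\bf 1}_n$ (and hence has vanishing Milnor invariants of length $\leq k$ by Theorem~\ref{MilnorC_k}), Lemma~\ref{additivity} applied iteratively yields $\mu_{l_1\cdots l_k}(M) = \mu_\sigma(M)$ for all $|M| \leq k+1$.

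After $n-1$ iterations we obtain a product $l_1 \cdot l_2 \cdots l_{n-1}$ whose Milnor link-homotopy invariants agree with those of $\sigma$ on every sequence of distinct indices of length $\leq n$. Since Habegger and Lin \cite{HL} proved that Milnor's link-homotopy invariants form a complete set of link-homotopy invariants for string links, we conclude that $\sigma$ is link-homotopic to $l_1 \cdot l_2 \cdots l_{n-1}$. The main obstacle is the realization property of the $T_M$ together with the verification that the family $\{T_M : M \in \mathcal{M}_{I,k+1}\}$ is rich enough: every length-$(k+1)$ link-homotopy Milnor invariant must be realizable by a product of these specific trees, which amounts to checking that the shuffle relations reduce a full basis of length-$(k+1)$ link-homotopy invariants to those indexed by $\mathcal{M}_{I,k+1}$ once shorter invariants have been normalized to match $\sigma$.
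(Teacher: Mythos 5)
Your proposal is correct and takes essentially the same route as the paper, which does not write out a proof at all but simply asserts that the lemma is ``proved by the same method of \cite[Lemma 4.2, Theorem 4.3]{Y}'' --- namely, exactly the induction you describe: realization of a basis of length-$(k+1)$ link-homotopy Milnor invariants by the trees $T_M^{\pm 1}$, degree-by-degree matching via Lemma~\ref{additivity}, and the Habegger--Lin completeness of Milnor link-homotopy invariants for string links \cite{HL}. You also correctly isolate the two nontrivial inputs deferred to \cite{Y} (the realization property of $T_M$ and the fact that the $\mathcal{M}_{I,k+1}$-indexed invariants span), which is precisely what the cited results supply.
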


\begin{remark}\label{MYrem}
In \cite{MY}, the statement slightly differs from the original one in \cite{Y}, 
because the authors used a different definition for the $C_k$-trees $T_M$ and $T_M^{-1}$ from {\cite{Y}}. But in {\cite[Theorem 4.1]{MY}}, when $i\geq 2$, a sign $(-1)^{i+1}$ of  $\mu_L(M)-\mu_{l_1 l_2\cdots l_{i-1}}(M)$ seems to be missing.
In this paper, we give $+1$ as a sign, by changing a part of orientations of a link (compare Figure~\ref{C_k-trees} with {\cite[Figure 4.1]{Y} and \cite[Figure 7]{MY}}).
\end{remark}

\subsection{Proof of Theorem~\ref{main}}

\begin{proof}[Proof of Theorem \ref{main}] 
The course of proof of Theorem~\ref{main} is similar to one in \cite[Theorem 1.1]{MY}. 
Let $n \geq 4$ and $\sigma$ be an $n$-string link with vanishing Milnor's link-homotopy invariants of length $\leq n-2$.
Let $I=i_1i_2 \ldots i_n$ be a sequence obtained by permuting $ 1 2 \ldots n $. 

In the proof, we will simply write $\overline{\sigma_{J}}$ for $\overline{\sigma_{I,J}}$. 
Moreover, concepts {\it in good position} and {\it weight} appearing in this proof are defined in \cite{MY}.  \\

\noindent Step 1: 
By combination of Lemma~\ref{standerd form} and the assumption that Milnor's  
link-homotopy invariants of length $\leq n-2$ vanish,
$\sigma$ is link-homotopic to $l_{n-2} \cdot l_{n-1}$, where $l_{n-2}=\prod_{M \in \mathcal{M}_{I,n-1}} {({\bf 1}_{T_M^{\epsilon}})}^{|x_M|}$ and $l_{n-1}=\prod_{M \in \mathcal{M}_{I,n}} {({\bf 1}_{T_M^{\epsilon}})}^{|x_M|}$.
Set   
\[\mathcal{F} := \left(\bigcup _{M \in \mathcal{M}_{I,n-1}}   {(T_M^{\epsilon})}^{|x_M|} \right) \cup \left(\bigcup _{M \in \mathcal{M}_{I,n}}   {(T_M^{\epsilon})}^{|x_{M}|} \right). \]
Then $l_{n-2} \cdot l_{n-1} $ can be regarded as the string link ${\bf 1}_{\mathcal{F} }$ obtained from ${{\bf 1}_n }$ by surgery along the union of trees $\mathcal{F}$.
Moreover, there is a disjoint union $\mathcal{R}_1$ of $C_1$-trees whose tree intersects a single component of the string link $l_{n-2} \cdot l_{n-1} $ such that 
\begin{align*}
    \sigma = (l_{n-2} \cdot  l_{n-1} )_{\mathcal{R}_1}=({{\bf 1} }_{\mathcal{F} })_{\mathcal{R}_1}.
\end{align*}

\noindent
Then $\sigma$ can be regarded as the string link ${\bf 1}_{\mathcal{F} \cup \mathcal{R}_1}$ obtained from ${\bf 1}_n$ by surgery along the union of trees $\mathcal{F} \cup \mathcal{R}_1$. 

In the canonical diagram of ${\bf 1}_n$, a tree for ${\bf 1}_n$ is {\em in good position} if each component of ${\bf 1}_n$ underpasses all edges of the tree.
Note that each tree of $\mathcal{F}$ is in good position, 
by the definition of $({\bf 1}_n)_{T_M}$ and ${({\bf 1}_n)_{T_M^{-1}}}$.
On the other hand,  a tree of $\mathcal{R}_1$ may not be in good position.  
We now replace $\mathcal{R}_1$ with some trees in good position up to $C_n$-equivalence, 
by repeated applications of \cite[Proposition 4.5]{H}, and we then have ${\bf 1}_{\mathcal{F} \cup \mathcal{R}_1} $ is $C_n$-equivalent to ${\bf 1}_{\mathcal{F} \cup \mathcal{R}}$, where $\mathcal{R}$ is a disjoint union of $C_k$-trees ($k \geq 2$) for ${\bf 1}_n$ in good position whose tree intersects some component of ${\bf 1}_n$ more than once. 

It follows from \cite[Lemma 3.2]{MY} that for any subsequence $J$ of $I$ (possibly $I$),  
\[
    P_0 \left( \overline{({{\bf 1}_{\mathcal{F} \cup \mathcal{R}})}_J} ; t \right) = P_0 \left( \overline{({{\bf 1}_{\mathcal{G} \cup \mathcal{R}})}_J} ; t \right),
\]
where 
\[
    {\mathcal{G}} = \left(\bigcup _{M \in \mathcal{M}_{I,n-1}, M<I} {(T_M^{\epsilon})}^{|x_M|} \right) \cup {(T_I^{\epsilon})}^{|x_I|}, 
\]  
meaning that for any $M \in \mathcal{M}_{I, n-1} \cup \mathcal{M}_{I, n}$, we can ignore $T_M$'s and $T_M^{-1}$'s such that $M$ is not subsequence of $I$ when computing $P_0( \ \cdot \ ; t)$. \\

\noindent Step 2:
By using the arguments of \cite{MY} for transformations of tree claspers, 
the knot $\overline{({{\bf 1}_{\mathcal{G} \cup \mathcal{R}})}_J}$ is $C_n$-equivalent to the connected sum
\begin{align*}
\sharp_{|x_I|} U_{(T_I^\epsilon)_J } \sharp_{M \in{\mathcal{M}_{I,n-1}}, M<J} ( \sharp_{|x_M|} U_{T_M^\epsilon } ) 
\sharp U_{(\mathcal{R}')_J}, 
\end{align*}
where $U_{(T_I^\epsilon)_J }$ means $\overline{({{\bf 1}_{T_I^\epsilon }})_J}$,  $U_{T_M^\epsilon }$ means $\overline{({{\bf 1}_{T_M^\epsilon }})_J}$, and $\mathcal{R}'$ is a disjoint union of some trees for the trivial knot $U$ such that the number of the weight of each tree is less than or equal to the degree and $(\mathcal{R}')_J$ is the union of trees of $\mathcal{R}'$ whose weight is a subset of the set of elements in $J$. 
By \cite{K}, ${ P_0^{(n-1)} (\ \cdot \ ;1) }$ is a finite type invariant of degree $n-1$.
Because a finite type invariant of degree $n-1$ is an invariant of $C_n$-equivalence \cite{H},  
${ P_0^{(n-1)} (\ \cdot \ ;1) }$ is invariant under $C_n$-equivalence.
Moreover by using Equation (\ref{connectedsum}), for any subsequence $J$ of $I$, we have 
\[ \begin{array}{rcl}
\displaystyle\sum_{J<I}(-1)^{|J|} { P_0^{(n-1)} \left( \overline{\sigma_J};1 \right) }
&=& \displaystyle\sum_{J<I}(-1)^{|J|} { P_0^{(n-1)} (\overline{({\bf 1}_{\mathcal{G} \cup \mathcal{R}})_J};1) } \\
&=& (-1)^{|I|} |x_I| { P_0^{(n-1)}(U_{(T_I^\epsilon)_I } ;1)} \\
&&\hspace{-.4cm} + \displaystyle \sum_{J<I}
(-1)^{|J|} \Large{ \displaystyle\sum_{M<J, M \in \mathcal{M}_{I, n-1}} |x_{M}| { P_0^{(n-1)} (U_{T_M^\epsilon };1)} \Large} \\
&&\hspace{-.4cm} + \displaystyle\sum_{J<I}(-1)^{|J|} { P_0^{(n-1)} (U_{(\mathcal{R}')_J};1). } \\
\end{array}
\]
By using the method of \cite{MY}, we have
\[ \displaystyle\sum_{J<I}(-1)^{|J|} { P_0^{(n-1)} \left( \overline{\sigma_J};1 \right) }= (-1)^{n-1} x_I 2^{n-1} (n-1)!. \]
On the other hand, 
by Lemma \ref{additivity}, $\mu_{l_{n-2}} (I)=0$ for $n \geq 4$.
Thus, by the definition of $x_I$ we have 

\begin{align*}\label{claim:milnor1}
x_I =  {\mu}_{\sigma}(I) - \mu_{l_{n-2}} (I)  =  {\mu}_{\sigma}(I). 
\end{align*}
Therefore, we have
\begin{align*}
\mu_{\sigma}(I)= x_I=   
 &  \frac{(-1)^{n-1}}{2^{n-1}(n-1)!} \sum_{J<I} (-1)^{|J|}  { P_0^{(n-1)}(\overline{\sigma_J} ;1)}.
\end{align*} 
\end{proof} 

\subsection{Proof of Theorem~\ref{main2}}

We consider the case $n=3$.
This proof is similar to that of Theorem~\ref{main}.
However, in this case, when we transform $\overline{\sigma_J}$ to the connected sum of some knots up to $C_3$-equivalence,
a form of the connected sum is affected by new trees which come from leaf slides.
We note that $P_0^{(2)}=-8a_2$.

\begin{proof}[Proof of Theorem \ref{main2}]

Let $\sigma$ be a 3-string link and $I=i_1i_2i_3$ a sequence obtained by permuting 123. 
Similar to Step 1 of Theorem~\ref{main}, we have that for any subsequence $J$ of $I$ (possibly $I$), 
$$ P_0 \left({\overline{\sigma_J}}; t \right) =P_0 \left( \overline{{({\bf 1}_{\mathcal{G} \cup \mathcal{R}})_J}}; t \right),$$
where 
\[
    {\mathcal{G}} =  (T_{i_1i_2}^{\epsilon})^{|x_{i_1i_2}|} \cup  (T_{i_1i_3}^{\epsilon})^{|x_{i_1i_3}|} \cup (T_{i_2i_3}^{\epsilon})^{|x_{i_2i_3}|}  \cup {(T_I^{\epsilon})}^{|x_I|} 
\]  
and $\mathcal{R}$ is a disjoint union of trees for ${\bf 1}_n$ in good position whose tree intersects some component of ${\bf 1}_n$ more than once. 

By {\cite[Lemma 2.2]{MY}} and {\cite[Lemma 2.4]{Y}}, 
we have that $\overline{\sigma_I}$ is $C_3$-equivalent to the connected sum
\begin{align*}
\sharp_{|x_I|} U_{T_I^\epsilon } \sharp U_{\mathcal{G}'} \sharp U_{\mathcal{R}'} 
\end{align*}
where $U_{T_I^\epsilon }$ means $\overline{{({\bf 1}_{T_I^\epsilon })}_I}$, 
$U_{\mathcal{G}'}$ means $\overline{({\bf 1}_{\mathcal{G}'})_I}$ 
where ${\mathcal{G}'} =  (T_{i_1i_2}^{\epsilon})^{|x_{i_1i_2}|} \cup  (T_{i_1i_3}^{\epsilon})^{|x_{i_1i_3}|} \cup (T_{i_2i_3}^{\epsilon})^{|x_{i_2i_3}|}$ and $\mathcal{R}'$ is a disjoint union of some trees for $U$ such that the number of the weight is less than or equal to the degree. 
Since $a_2$ is additive for the connected sum of knots, we have 
\begin{align*}
 \begin{array}{rcl}
\displaystyle \sum_{J<I}(-1)^{|J|}  a_2({\overline{\sigma_J}}) 
&=& \displaystyle\sum_{J<I}(-1)^{|J|}a_2(\overline{{({\bf 1}_{\mathcal{G} \cup \mathcal{R}})_J}}) \\
&=& (-1)^{|I|} \{ |x_{I}|  a_2(U_{T_{I}^\epsilon }) + a_2(U_{\mathcal{G}'}) \} \\
&=& - (x_{I} + x_{i_1i_2}x_{i_1i_3} + x_{i_1i_3}x_{i_2i_3} ). \\ 
 \end{array}
\end{align*}

On the other hand, by the definition of $x_{M}$ ($M=i_1i_2, i_1i_3, i_2i_3$) and $x_{I}$ in Lemma~\ref{standerd form}, we have that
$x_{M} = lk_\sigma(M)$
and $x_{I} =  \mu_\sigma(I) -  \mu_{l_1}(I)$, 
where $l_1={({\bf 1}_{T_{i_1i_2}^{\epsilon }})}^{|x_{i_1i_2}|} \cdot {({\bf 1}_{T_{i_1i_3}^{\epsilon }})}^{|x_{i_1i_3}|} \cdot {({\bf 1}_{T_{i_2i_3}^{\epsilon }})}^{|x_{i_2i_3}|}$. 
By the definition of the Milnor's $\mu$-invariants (see \cite{Milnor, Milnor2} for details to calculate), we have
$$ \mu_{l_1}(I) =  lk_\sigma(i_1i_2) lk_\sigma(i_1i_3) + lk_\sigma(i_1i_3)lk_\sigma(i_2i_3)  - lk_\sigma(i_1i_2)lk_\sigma(i_2i_3) + A_I ,$$
where 
\begin{align*}
A_{I} = 
\begin{cases}
    lk_\sigma(i_1i_2)  & (I=312) \\ 
    -lk_\sigma(i_1i_2) & (I=132) \\ 
    0 & (otherwise). 
\end{cases}
 \end{align*}
Therefore, we have 
 the formulas in Theorem~\ref{main2}. 
\end{proof}

\section{Examples}

\begin{example}
Let $\sigma$ be a 3-string link showed by Figure \ref{stringlink1}. 
Then $\mu_{123}(\sigma)=-1$, $\mu_{132}(\sigma)=\mu_{213}(\sigma)=1$ and $\mu_{231}(\sigma)=\mu_{312}(\sigma)=\mu_{321}(\sigma)=0$. 
And $lk_\sigma(12)=lk_\sigma(23)=1$ and $lk_\sigma(13)=0$.
 
On the other hand, $\overline{\sigma_{123,123}}$ and $\overline{\sigma_{123, 23}}$ are the figure-eight knot, and $\overline{\sigma_{123,J}}$ ($J \neq 123, 23$) is the trivial knot.
Therefore we obtain 
\begin{align*}
- \sum_{J<123} (-1)^{|J|} a_2(\overline{\sigma_{123,J}}) -lk_\sigma(12) lk_\sigma(23) 
 = a_2(4_1) - a_2(4_1) - 1 \cdot 1 
= -1. 
\end{align*}
Similarly, we have equations between Milnor's invariants and Conway polynomials.

\begin{figure}[h]
  \begin{center}
\includegraphics[width=.81\linewidth]{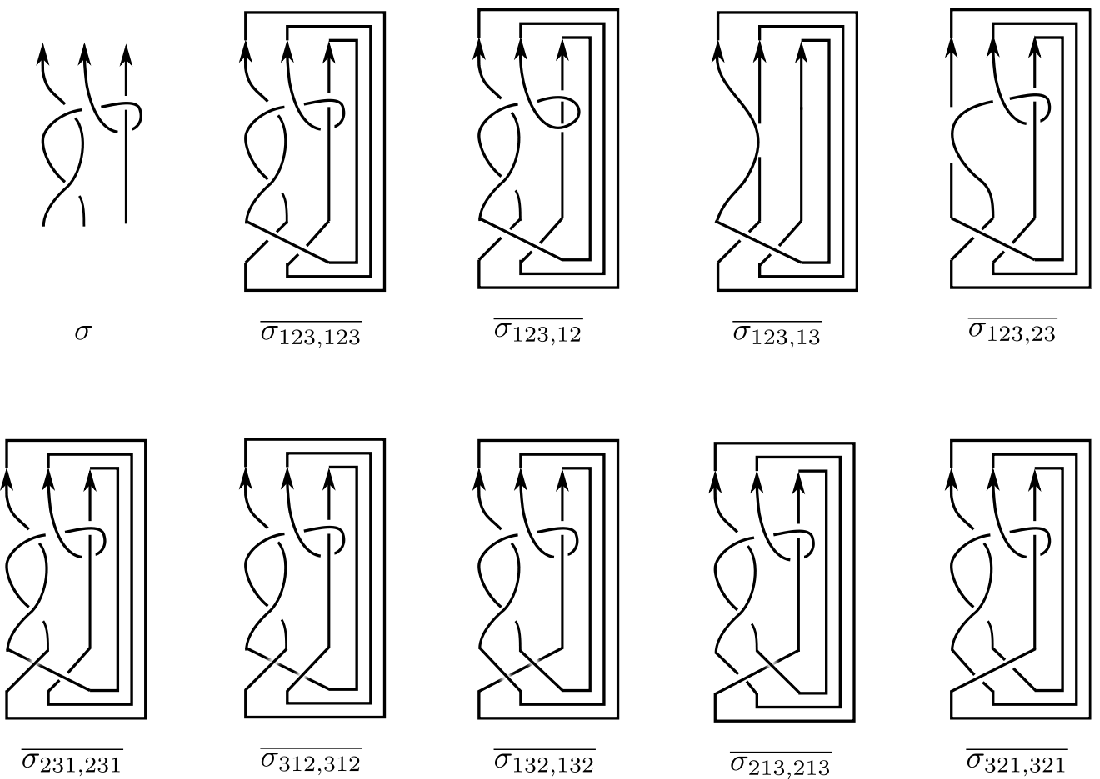}
     \caption{Example}
    \label{stringlink1}
  \end{center}
\end{figure}
\end{example}

%


\end{document}